\newtheorem{theorem}{Theorem}[section]
\newtheorem{lemma}[theorem]{Lemma}
\newtheorem{corollary}[theorem]{Corollary}
\newtheorem{proposition}[theorem]{Proposition}
\newdefinition{remark}[theorem]{Remark}
\newdefinition{definition}[theorem]{Definition}
\newdefinition{conjecture}[theorem]{Conjecture}
\newdefinition{example}[theorem]{Example}
\newdefinition{notation}[theorem]{Notation}
\begin{document}

\begin{frontmatter}



\title{A proof of the Corrected Beiter conjecture}


\author{Jia Zhao}

\author{Xianke Zhang}

\begin{abstract}
We say that a cyclotomic polynomial $\Phi_{n}(x)$ has order three if
$n$ is the product of three distinct primes, $p<q<r$. Let $A(n)$ be
the largest absolute value of a coefficient of $\Phi_{n}(x)$ and
$M(p)$ be the maximum of $A(pqr)$. In 1968, Sister Marion Beiter
conjectured that $A(pqr)\leqslant \frac{p+1}{2}$. In 2008, Yves
Gallot and Pieter Moree showed that the conjecture is false for
every $p\geqslant 11$, and they proposed the Corrected Beiter
conjecture: $A(pqr)\leqslant \frac{2}{3}p$. Here we will give a
proof of this conjecture.
\end{abstract}

\begin{keyword}
Cyclotomic polynomial \sep Corrected Beiter conjecture

\MSC[1991] 11B83 \sep 11C08
\end{keyword}

\end{frontmatter}

\section{Introduction}
The $n$th cyclotomic polynomial is the monic polynomial whose roots
are the primitive $n$th roots of unity and are all simple. It is
defined by
$$\Phi_{n}(x)=\prod_{\substack{1\leqslant a\leqslant n\\(a,n)=1}}(x-e^{\frac{2\pi
ia}{n}})=\sum_{i=0}^{\phi(n)}c_{i}x^{i}.$$ The degree of $\Phi_{n}$
is $\phi(n)$, where $\phi$ is the Euler totient function. It is
known that the coefficients $c_{i}$, where $0\leqslant i\leqslant
\phi(n)$, are all integers.
\begin{definition}
$$A(n)=\max\{|c_{i}|,0\leqslant
i\leqslant \phi(n)\}.$$
\end{definition}
For $n<105$, $A(n)=1$. It was once conjectured that this would hold
for all $n$, however $A(105)=2$. Note that 105 is the smallest
positive integer that is the product of three distinct odd primes.
In fact, it is easy to prove that $A(p)=1$ and $A(pq)=1$ for
distinct primes $p,q$. Besides, we have the following useful
propositions.
\begin{proposition}
The nonzero coefficients of $\Phi_{pq}(x)$ alternate between $+1$
and $-1$.
\end{proposition}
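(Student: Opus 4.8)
The plan is to obtain an explicit handle on the coefficients $c_i$ of $\Phi_{pq}(x)$ by passing to a generating function. Starting from $x^{pq}-1=\prod_{d\mid pq}\Phi_d(x)=(x-1)\Phi_p(x)\Phi_q(x)\Phi_{pq}(x)$ together with the known $\Phi_p(x)=(x^p-1)/(x-1)$ and $\Phi_q(x)=(x^q-1)/(x-1)$, I would first derive
$$\Phi_{pq}(x)=\frac{(1-x^{pq})(1-x)}{(1-x^p)(1-x^q)}.$$
Because $\deg\Phi_{pq}=(p-1)(q-1)=pq-p-q+1<pq$, writing $\Phi_{pq}(x)=(1-x^{pq})F(x)$ with $F(x)=(1-x)/\bigl((1-x^p)(1-x^q)\bigr)$ shows that the factor $(1-x^{pq})$ only alters coefficients of degree $\ge pq$; hence for every index $n$ in the relevant range $0\le n\le (p-1)(q-1)$ the coefficient $c_n$ equals the coefficient of $x^n$ in the formal power series $F(x)$. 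Expanding $1/(1-x^p)=\sum_{i\ge 0}x^{ip}$ and $1/(1-x^q)=\sum_{j\ge 0}x^{jq}$, the coefficient of $x^n$ in $1/\bigl((1-x^p)(1-x^q)\bigr)$ is $b_n:=\#\{(i,j):i,j\ge 0,\ ip+jq=n\}$, so that $c_n=b_n-b_{n-1}$ on this range, with the convention $b_m=0$ for $m<0$.

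The next step is the number-theoretic input supplied by $\gcd(p,q)=1$: if $ip+jq=i'p+j'q<pq$ then $(i-i')p=(j'-j)q$ and $\gcd(p,q)=1$ forces $q\mid(i-i')$, so $i=i'$ and $j=j'$. Thus each $n<pq$ admits at most one representation, i.e. $b_n\in\{0,1\}$ throughout $0\le n\le (p-1)(q-1)$. Consequently $c_n=b_n-b_{n-1}\in\{-1,0,1\}$, which already recovers that every coefficient of $\Phi_{pq}$ is $0$ or $\pm1$.

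Finally I would prove the alternation by reading the nonzero $c_n$ as the sign changes of the $0/1$ sequence $(b_n)$. Writing $t_n=b_n-b_{n-1}$, a nonzero $t_n$ equals $+1$ exactly at a rise $b_{n-1}=0,\ b_n=1$ and $-1$ exactly at a fall $b_{n-1}=1,\ b_n=0$. If $t_{n_1}$ and $t_{n_2}$ are consecutive nonzero terms, then $b$ is constant on $n_1\le n<n_2$, and since $b$ takes only the values $0$ and $1$, a rise at $n_1$ must be followed by a fall at $n_2$ and conversely; hence consecutive nonzero $t_n$ have opposite signs. As $b_0=1$ while $b_{-1}=0$, the first nonzero coefficient is $c_0=+1$, so the nonzero $c_n$ run $+1,-1,+1,-1,\dots$, which is the assertion. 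The only genuinely delicate points are the degree bookkeeping that licenses discarding the $(1-x^{pq})$ factor and the uniqueness-of-representation bound $b_n\le 1$ for $n<pq$; once these are in place, the alternation is immediate from the two-valuedness of $(b_n)$.
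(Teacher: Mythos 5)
Your proof is correct, and it is worth noting that the paper itself supplies no argument for this proposition at all: it is stated as a known fact (the ``See \cite{Lenstra} for details'' attached to the following proposition covers the functional equations, not the alternation), and the closest the paper comes to the coefficients of $\Phi_{pq}$ is the quoted Lam--Leung description (Theorem 2.4), from which alternation still requires an ordering argument that is never carried out. Your route --- deriving $\Phi_{pq}(x)=(1-x^{pq})(1-x)/\bigl((1-x^p)(1-x^q)\bigr)$, discarding the $(1-x^{pq})$ factor below degree $pq$, and reading $c_n=b_n-b_{n-1}$ where $b_n\in\{0,1\}$ counts representations $n=ip+jq$ --- is the classical self-contained argument (essentially Carlitz, and the same representation-counting idea underlying \cite{Lam}), and it buys more than the statement asks for: in one pass you get $c_n\in\{0,\pm1\}$, the alternation, and the fact that the first nonzero coefficient is $+1$, whereas the paper records only the alternation and outsources everything to the literature. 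One small point to tighten: in the uniqueness step, $q\mid(i-i')$ alone does not give $i=i'$; you also need $i,i'<q$, which follows from $ip,\,i'p\leqslant n<pq$ --- this is implicit in your write-up and harmless, but should be said, since it is exactly where the restriction $n<pq$ (licensed by your degree bookkeeping) enters. With that sentence added, the proof is complete and fully rigorous.
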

\begin{proposition} Let $p$ be a prime.\\
If $p\,|\,n$, then $\Phi_{pn}(x)=\Phi_{n}(x^{p})$, so $A(pn)=A(n)$.\\
If $p\nmid n$, then $\Phi_{pn}(x)=\Phi_{n}(x^{p})/\Phi_{n}(x)$.\\
If $n$ is odd, then $\Phi_{2n}(x)=\Phi_{n}(-x)$, so $A(2n)=A(n)$.
\end{proposition}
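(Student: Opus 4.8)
The plan is to prove all three identities by the same mechanism: each side is a monic polynomial with simple roots, so it suffices to show that the two sides have the same set of roots and the same degree. Throughout I use the characterization that $\alpha$ is a root of $\Phi_m(x)$ exactly when $\alpha$ is a root of unity of multiplicative order $m$, together with the elementary order rules: if $\mathrm{ord}(\alpha)=m$ then $\mathrm{ord}(\alpha^p)=m/\gcd(m,p)$, and if $m$ is odd then $\mathrm{ord}(-\alpha)=2m$.

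For the first two parts I would analyze $\Phi_n(x^p)$ at once. Its roots are precisely those $\alpha$ for which $\alpha^p$ is a primitive $n$th root of unity, i.e. $\mathrm{ord}(\alpha)/\gcd(\mathrm{ord}(\alpha),p)=n$. Writing $m=\mathrm{ord}(\alpha)$ and using $\gcd(m,p)\in\{1,p\}$, this forces $m=n$ (when $p\nmid m$) or $m=pn$ (when $p\mid m$). If $p\mid n$, the solution $m=n$ is impossible, since $m=n$ together with $p\nmid m$ would contradict $p\mid n$; hence the only roots are the primitive $pn$th roots, and comparing degrees ($\deg\Phi_n(x^p)=p\,\phi(n)=\phi(pn)$) gives $\Phi_{pn}(x)=\Phi_n(x^p)$. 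If instead $p\nmid n$, both $m=n$ and $m=pn$ occur, so the roots of $\Phi_n(x^p)$ split into the primitive $n$th roots and the primitive $pn$th roots, whence $\Phi_n(x^p)=\Phi_n(x)\Phi_{pn}(x)$, i.e. $\Phi_{pn}(x)=\Phi_n(x^p)/\Phi_n(x)$. Finally $A(pn)=A(n)$ is immediate from $\Phi_{pn}(x)=\Phi_n(x^p)$, since replacing $x$ by $x^p$ only redistributes the coefficients of $\Phi_n$ onto exponents that are multiples of $p$, leaving the multiset of nonzero coefficients unchanged.

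For the third part, with $n$ odd, I would check that $\alpha\mapsto-\alpha$ carries the primitive $n$th roots of unity bijectively onto the primitive $2n$th roots of unity: if $\mathrm{ord}(\alpha)=n$ then $\mathrm{ord}(-\alpha)=2n$ by the order rule above (using that $n$ odd forces $\gcd(2,n)=1$), and the map is its own inverse up to sign. Thus $\Phi_n(-x)$ and $\Phi_{2n}(x)$ have the same roots; since $\phi(2n)=\phi(n)$ for odd $n$ their degrees agree, and because $\phi(n)$ is even for odd $n>1$ the polynomial $\Phi_n(-x)$ is again monic, so the two monic polynomials coincide. Then $A(2n)=A(n)$ follows because negating the variable only changes the signs of some coefficients, not their absolute values.

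I expect no serious obstacle here, as the proposition merely packages standard facts. The only points requiring genuine care are the case distinction $\gcd(m,p)\in\{1,p\}$ in the order computation (in particular, ruling out the spurious order-$n$ solution when $p\mid n$), and the verification that the candidate polynomials are truly monic of the correct degree, so that coincidence of root sets can be upgraded to equality of polynomials. The degenerate case $n=1$ in the third part fails by a sign, since $\Phi_1(-x)=-\Phi_2(x)$, but this is irrelevant for the intended application where $n$ is a product of odd primes.
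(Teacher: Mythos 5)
Your proof is correct, but there is nothing in the paper to compare it against: the paper disposes of this proposition with a bare citation (``See \cite{Lenstra} for details'') and gives no argument of its own. What you supply is the standard self-contained proof, and it is complete: since all the polynomials involved are monic with simple roots, equality of root sets plus a degree count suffices, and your order computation $\mathrm{ord}(\alpha^p)=\mathrm{ord}(\alpha)/\gcd(\mathrm{ord}(\alpha),p)$ correctly sorts the roots of $\Phi_n(x^p)$ into primitive $pn$th roots alone (when $p\mid n$) or primitive $n$th together with primitive $pn$th roots (when $p\nmid n$), with the degree identities $\phi(pn)=p\,\phi(n)$ resp.\ $\phi(pn)=(p-1)\phi(n)$ closing the argument. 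Two details you invoke but leave tacit deserve a line each if written out in full: that $\Phi_n(x^p)$ indeed has simple roots (each equation $x^p=\zeta$ has $p$ distinct solutions, and these solution sets are disjoint for distinct $\zeta$), and, in the third part, that $\Phi_n(-x)$ is monic because $\phi(n)$ is even for odd $n>1$ --- which you do address, along with the genuine (and usually glossed-over) sign failure at $n=1$, where $\Phi_1(-x)=-\Phi_2(x)$; that caveat is harmless here since the paper only applies the proposition with $n$ a product of odd primes. The passage from the polynomial identities to $A(pn)=A(n)$ and $A(2n)=A(n)$ via coefficient redistribution and sign flips is exactly right. In short: correct, and more informative than the paper's treatment, at the modest cost of a paragraph the paper chose to outsource.
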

\begin{proof}
See \cite{Lenstra} for details.
\end{proof}

By the proposition above, it suffices to consider squarefree values
of $n$ to determine $A(n)$. For squarefree $n$, the number of
distinct odd prime factors of $n$ is the order of the cyclotomic
polynomial $\Phi_{n}$. Therefore the cyclotomic polynomials of order
three are the first non-trivial case with respect to $A(n)$. We also
call them ternary cyclotomic polynomials.

Assume $p<q<r$ are odd primes, Bang \cite{Bang} proved the bound
$A(pqr)\leqslant p-1$. This was improved by Beiter
\cite{Beiter,Beiter2}, who proved that $A(pqr)\leqslant
p-\lfloor\frac{p}{4}\rfloor$, and made the following conjecture.
\begin{conjecture}[(Beiter)]$A(pqr)\leqslant \frac{p+1}{2}$.
\end{conjecture}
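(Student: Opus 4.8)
The plan is to derive an explicit arithmetic formula for the individual coefficients of $\Phi_{pqr}$ and then bound them. Write $a_{n}(k)$ for the coefficient of $x^{k}$ in $\Phi_{n}$. Since $p\nmid qr$, the recursion in the Proposition gives $\Phi_{pqr}(x)=\Phi_{qr}(x^{p})/\Phi_{qr}(x)$. Because $\Phi_{qr}(0)=1$ the reciprocal $1/\Phi_{qr}(x)$ is a well-defined formal power series, and from $\Phi_{qr}(x)=(x^{qr}-1)(x-1)/\big((x^{q}-1)(x^{r}-1)\big)$ one computes
$$\frac{1}{\Phi_{qr}(x)}=(1-x^{q})(1-x^{r})\sum_{m\geqslant0}\sum_{i\geqslant0}x^{mqr+i},$$
so the coefficient of $x^{N}$ in $1/\Phi_{qr}$ is $g(N):=h(N)-h(N-q)-h(N-r)+h(N-q-r)$, where $h(N)=\lfloor N/qr\rfloor+1$ for $N\geqslant0$ and $h(N)=0$ otherwise. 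Convolving the two factors yields
$$a_{pqr}(k)=\sum_{0\leqslant pj\leqslant k}a_{qr}(j)\,g(k-pj),$$
an identity I would use throughout the admissible range $0\leqslant k\leqslant\phi(pqr)$.

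The next step exploits the first Proposition: the nonzero $a_{qr}(j)$ are $+1$ and $-1$ in strict alternation. Feeding this sign pattern into the convolution and pairing consecutive nonzero terms collapses $a_{pqr}(k)$ into a difference $N_{+}(k)-N_{-}(k)$ of two counting functions. Each $N_{\pm}(k)$ counts the lattice points of a single arithmetic progression of common difference $p$ that lie in a half-open interval determined by $k$ and by the gaps between consecutive nonzero coefficients of $\Phi_{qr}$. Since such an interval meets at most $O(p)$ residues, this already recovers Bang's $A(pqr)\leqslant p-1$, and a sharper accounting of where the progression enters and leaves the interval reproduces Beiter's improvement.

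To reach $\frac{p+1}{2}$ I would push the cancellation between $N_{+}$ and $N_{-}$ to its limit. The essential point is that the two progressions are not independent: the positions of the multiples of $q$ and of $r$ within a window of length $p$ are rigidly linked through the inverses $\bar q\equiv q^{-1}$ and $\bar r\equiv r^{-1}\pmod p$. Partitioning $[0,p)$ according to the relative cyclic order of $0,\bar q,\bar r,\bar q+\bar r\pmod p$ and tracking, window by window, which residues contribute $+1$ and which contribute $-1$, I would try to show that the surplus of one sign over the other never exceeds half the window. The palindromic symmetry $a_{pqr}(k)=a_{pqr}(\phi(pqr)-k)$ halves the labor by letting me restrict to $k\leqslant\phi(pqr)/2$.

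The hard part is this final uniform estimate. The naive bounds lose a constant factor precisely because they ignore the correlation between the $q$- and $r$-windows; making that correlation explicit forces a delicate case analysis whose outcome depends sensitively on $\bar q,\bar r\bmod p$, and it is not at all clear a priori that the worst configuration stalls at $\frac{p+1}{2}$ rather than creeping higher. This is exactly the step where the true maximal coefficient is decided, so I would expect the argument to hinge on either completing the imbalance bound uniformly in $q,r$ or on isolating the precise residue configurations that attain $\frac{p+1}{2}$ — and I would watch carefully for configurations that threaten to overshoot it, since those are where the whole bound stands or falls.
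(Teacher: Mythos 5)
There is a genuine and unfixable gap: the statement you are trying to prove is false. In this paper Beiter's inequality $A(pqr)\leqslant \frac{p+1}{2}$ appears only as a \emph{conjecture}, recorded for historical context, and the introduction explicitly states (citing Gallot and Moree, J. Reine Angew. Math. 632 (2009)) that it fails for every prime $p\geqslant 11$; indeed Gallot and Moree showed $\frac{2}{3}p(1-\varepsilon)\leqslant M(p)$ for every sufficiently large $p$, so the true maximum exceeds $\frac{p+1}{2}$ by a factor approaching $\frac{4}{3}$. Consequently the paper contains no proof of this statement, and none can exist; what the paper actually proves is the weaker Corrected Beiter bound $A(pqr)\leqslant \frac{2}{3}p$, by entirely different machinery (the Lam--Leung description of the coefficients of $\Phi_{pq}$, the reduction $A(pqr)\leqslant \max_{i,j}\left|\sum_{m\geqslant j}d_{m}\chi_{mr}(i)\right|$, and a counting argument over special, plain, and null residue classes).

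Your setup up to the convolution identity is sound: the formula $a_{pqr}(k)=\sum_{0\leqslant pj\leqslant k}a_{qr}(j)\,g(k-pj)$ with $g(N)=h(N)-h(N-q)-h(N-r)+h(N-q-r)$ is correct, the alternation of signs of the coefficients of $\Phi_{qr}$ is Proposition 1.2 of the paper, and this route does recover Bang's bound $p-1$ and Beiter's $p-\lfloor p/4\rfloor$. The collapse happens at exactly the step you deferred, namely the claim that the signed surplus ``never exceeds half the window.'' That claim is false: M\"{o}ller already showed the value $\frac{p+1}{2}$ is attained for infinitely many pairs $q<r$ for every $p$, and for $p\geqslant 11$ there are residue configurations $(\overline{q_{p}},\overline{r_{p}})$ whose surplus strictly exceeds $\frac{p+1}{2}$ --- asymptotically it climbs to about $\frac{2}{3}p$. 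Your closing caveat, that you ``would watch carefully for configurations that threaten to overshoot,'' identifies precisely the failure point: such configurations exist for every $p\geqslant 11$, so no uniform case analysis can close the argument at $\frac{p+1}{2}$. The correct target for this style of argument is $\frac{2}{3}p$, and even then the paper needs the additional structural input of Bzd\c{e}ga's bound and the special/plain/null class bookkeeping rather than a direct window-imbalance estimate.
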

Beiter proved her conjecture for $p\leqslant 5$ and also in case
either $q$ or $r\equiv\pm 1\pmod{p}$ \cite{Beiter}. If this
conjecture holds, it is the strongest possible result of this form.
This is because M\"{o}ller \cite{Moller} indicated that for any
prime $p$ there are infinitely many pairs of primes $q<r$ such that
$A(pqr)\geqslant \frac{p+1}{2}$. Define
$$M(p)=\max\{A(pqr)\mid p<q<r\},$$ where the prime $p$ is fixed, and $q$ and
$r$ are arbitrary primes. Now with M\"{o}ller's result, we can
reformulate Beiter's conjecture.
\begin{conjecture}
For $p>2$, we have $M(p)=\frac{p+1}{2}$.
\end{conjecture}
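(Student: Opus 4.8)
The plan is to prove the equality in two halves. The lower bound $M(p)\geqslant\frac{p+1}{2}$ is already in hand: by M\"oller's result \cite{Moller} there are, for each $p$, infinitely many pairs $q<r$ with $A(pqr)\geqslant\frac{p+1}{2}$, so the maximum over all $q,r$ is at least $\frac{p+1}{2}$. Hence the entire substance of the claim is the matching upper bound $A(pqr)\leqslant\frac{p+1}{2}$ for all odd primes $p<q<r$, which is Beiter's conjecture in its original form. First I would record the two structural reductions. Since $r\nmid pq$, Proposition 1.3 gives the factorization $\Phi_{pqr}(x)=\Phi_{pq}(x^{r})/\Phi_{pq}(x)$; and by the standard symmetry $c_{k}=c_{\phi(pqr)-k}$ of cyclotomic coefficients it suffices to bound $|c_{k}|$ for $k\leqslant\phi(pqr)/2$.

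Next I would convert the quotient into a power-series convolution. Writing $\Phi_{pq}(x)=(1-x)(1-x^{pq})/\big((1-x^{p})(1-x^{q})\big)$ yields
$$\frac{1}{\Phi_{pq}(x)}=\frac{(1-x^{p})(1-x^{q})}{(1-x)(1-x^{pq})},\qquad \frac{1}{(1-x)(1-x^{pq})}=\sum_{\ell\geqslant 0}d_{\ell}x^{\ell},\quad d_{\ell}=\lfloor \ell/(pq)\rfloor+1.$$
Multiplying by $1-x^{p}-x^{q}+x^{p+q}$ shows $1/\Phi_{pq}(x)=\sum_{\ell\geqslant0}b_{\ell}x^{\ell}$ with $b_{\ell}=d_{\ell}-d_{\ell-p}-d_{\ell-q}+d_{\ell-p-q}$ (negative indices dropped). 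Combining with $\Phi_{pq}(x^{r})$ gives
$$c_{k}=\sum_{j\geqslant 0,\ rj\leqslant k}a_{j}\,b_{k-rj},\qquad \Phi_{pq}(x)=\sum_{j}a_{j}x^{j},\quad a_{j}\in\{0,\pm1\},$$
where the coefficients $a_{j}$ are $0$ or alternate between $+1$ and $-1$ by Proposition 1.2. The point of this reformulation is that $b_{\ell}$ is assembled from the slowly varying step function $d_{\ell}$, so as $j$ increases the values $b_{k-rj}$ change only at predictable thresholds, and the $\pm1$ alternation of the $a_{j}$ is exactly what should force cancellation in the sum.

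I would then organize the estimate by the residue class of $r$ modulo $pq$, equivalently by the pair $(r\bmod p,\ r\bmod q)$. Beiter's already-verified cases $q\equiv\pm1$ or $r\equiv\pm1\pmod{p}$ \cite{Beiter} provide the base of the argument and a check on the bookkeeping: in those cases the indices $j$ fall into short-period blocks of length $p$ on which the partial sums of $a_{j}b_{k-rj}$ telescope cleanly to something of size at most $\frac{p+1}{2}$. The goal is to show that for \emph{every} admissible residue the positive and negative contributions to $c_{k}$ pair off so that their net never exceeds $\frac{p+1}{2}$ in absolute value, uniformly in $q$, $r$ and all $k\leqslant\phi(pqr)/2$.

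The hard part will be precisely the generic residues of $r\bmod pq$, where no short period is forced. There the crude triangle inequality on the convolution recovers only Bang's bound $A(pqr)\leqslant p-1$ \cite{Bang}, and Beiter's refinement $p-\lfloor p/4\rfloor$, but not $\frac{p+1}{2}$. To descend from $p-1$ all the way to $\frac{p+1}{2}$ one must prove that the $+1$ and $-1$ terms are not merely individually bounded but are geometrically interlocked into near-exact cancellation, and that this cancellation persists across block boundaries for arbitrary $q$ and $r$. Establishing that uniform cancellation for the generic residue classes — rather than only for the special residues Beiter already controlled — is the crux on which the whole equality $M(p)=\frac{p+1}{2}$ stands or falls.
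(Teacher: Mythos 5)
You cannot succeed here, and the reason is stated in the paper itself: the statement you are trying to prove is a \emph{conjecture} that the paper records as refuted. This is the reformulation of Beiter's conjecture, and immediately after stating it the paper cites Gallot and Moree \cite{Gallot}, who showed that Beiter's conjecture is false for every prime $p\geqslant 11$, and moreover that $\frac{2}{3}p(1-\varepsilon)\leqslant M(p)$ for every sufficiently large $p$ and any $\varepsilon>0$. Since $\frac{2}{3}p>\frac{p+1}{2}$ for $p>3$, the equality $M(p)=\frac{p+1}{2}$ fails badly; the paper's own main theorem proves only the corrected bound $M(p)\leqslant\frac{2}{3}p$. So there is no proof in the paper to compare your attempt against, and no correct proof exists.

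Within your proposal, the lower-bound half is fine: M\"oller's result \cite{Moller} does give $M(p)\geqslant\frac{p+1}{2}$. But the upper-bound half, which you correctly identify as ``the entire substance of the claim,'' is exactly the part that is false. Your convolution setup via $\Phi_{pqr}(x)=\Phi_{pq}(x^{r})/\Phi_{pq}(x)$ and the expansion of $1/\Phi_{pq}$ is a legitimate and standard framework (it is essentially how Bang's and Beiter's partial bounds are obtained), and your diagnosis that everything hinges on uniform cancellation in the generic residue classes of $r\bmod pq$ is accurate — but that is precisely where the conjecture dies. For the residue classes realized in the Gallot--Moree counterexamples, the $+1$ and $-1$ contributions do \emph{not} pair off to within $\frac{p+1}{2}$: the constructions in \cite{Gallot} produce coefficients of size close to $\frac{2}{3}p$. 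Your honest flag that the generic case is ``the crux on which the whole equality stands or falls'' is therefore not a gap that more work could fill; it is the point at which the intended theorem is contradicted by known examples. The salvageable statement is the Corrected Beiter conjecture $M(p)\leqslant\frac{2}{3}p$, which is what this paper actually proves, by an entirely different method (the $\chi_{n}$ character sums of Lemma 2.1 and a class-counting argument on the Lam--Leung description of the coefficients of $\Phi_{pq}$, not a convolution estimate).
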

However, Gallot and Moree \cite{Gallot} showed that Beiter's
conjecture is false for every $p\geqslant 11$. Based on extensive
numerical computations, they gave many counter-examples and proposed
the Corrected Beiter conjecture.
\begin{conjecture}[(Corrected Beiter conjecture)]
We have $M(p)\leqslant \frac{2}{3}p$.
\end{conjecture}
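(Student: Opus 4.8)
The plan is to convert the estimation of $A(pqr)$ into an explicit signed count of integer indices and then carry out an arithmetic optimisation whose extremal value is $\frac{2}{3}p$. Since $r\nmid pq$, Proposition 1.4 gives $\Phi_{pqr}(x)=\Phi_{pq}(x^{r})/\Phi_{pq}(x)$, so the first step is to expand $1/\Phi_{pq}(x)$ as a formal power series. Starting from $\Phi_{pq}(x)=\frac{(1-x^{pq})(1-x)}{(1-x^{p})(1-x^{q})}$ and inverting, one obtains the closed form
$$\frac{1}{\Phi_{pq}(x)}=\Bigl(\sum_{i=0}^{p-1}x^{i}-\sum_{i=0}^{p-1}x^{q+i}\Bigr)\sum_{k\geq 0}x^{pqk}=\sum_{n\geq 0}b_{n}x^{n},$$
so that $b_{n}\in\{-1,0,1\}$, with $b_{n}=+1$ precisely when $n\bmod pq\in\{0,\dots,p-1\}$ and $b_{n}=-1$ precisely when $n\bmod pq\in\{q,\dots,q+p-1\}$. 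Writing $\Phi_{pq}(x^{r})=\sum_{m}a_{m}x^{rm}$, where the $a_{m}\in\{-1,0,1\}$ are the coefficients of $\Phi_{pq}$, the Cauchy product yields the identity on which everything rests,
$$c_{n}=\sum_{\substack{m\geq 0\\ rm\leq n}}a_{m}\,b_{n-rm}.$$

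Next I would make the right-hand side combinatorial. By Proposition 1.3 the nonzero $a_{m}$ alternate in sign, and by the Lam--Leung description their exponents fill two rectangular blocks of the form $\{ip+jq\}$; meanwhile $b_{n-rm}$ is decided by the residue of $n-rm$ modulo $pq$, which decreases in steps of $r$ as $m$ increases. Hence $c_{n}$ is the number of indices $m$ for which $n-rm$ lands in the positive window $\{0,\dots,p-1\}\pmod{pq}$ minus the number for which it lands in the negative window $\{q,\dots,q+p-1\}\pmod{pq}$, each term weighted by the sign of $a_{m}$. Using periodicity modulo $pq$ together with the three-gap structure of the sequence $\{rm\bmod pq\}$, this count is governed only by controlled residue data modulo $p$ (essentially $q^{-1}$ and $r^{-1}$ mod $p$); this is what makes $M(p)$ a finite, purely mod-$p$ quantity, in line with M\"oller's and Gallot and Moree's reductions, and it also lets me invoke the palindromic symmetry $c_{n}=c_{\phi(pqr)-n}$ to restrict to $n\leq\frac{1}{2}\phi(pqr)$.

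The core estimate then exploits the alternation of the $a_{m}$. Pairing each consecutive $+1,-1$ pair of nonzero coefficients, the contributions $b_{n-rm}$ telescope and cancel except near the two boundaries of each length-$p$ window, so the surviving net contribution to $c_{n}$ is controlled by how many multiples of $r$ (modulo $pq$) fall into the two windows as $m$ runs over an interval of length about $q$. Crudely bounding this recovers Bang's $A(pqr)\leq p-1$; the improvement to $\frac{2}{3}p$ requires showing that the positive and negative windows cannot both be maximally populated at the same time. I would organise the argument by the residue $t\equiv q^{-1}r\pmod p$ (equivalently by the pair $(q\bmod p,\ r\bmod p)$), split $\{0,\dots,p-1\}$ into the three arcs cut out by the two windows, and bound the signed count on each arc; the extremal configuration distributes the indices into three nearly equal groups, which is exactly what produces the value $\lfloor 2p/3\rfloor$.

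The hard part is precisely this last extremal analysis. Because Gallot and Moree produced $q,r$ with $A(pqr)$ as large as roughly $\frac{2}{3}p$, the bound is essentially attained, so the casework has no slack: one must verify that \emph{every} residue configuration, including those closest to the extremum, yields a signed count not exceeding $\frac{2}{3}p$, and the three-arc bookkeeping together with the three-gap spacing of $\{rm\bmod pq\}$ is where all the technical difficulty concentrates. Finally I would dispose of the small primes $p\leq 7$ by Beiter's bound $A(pqr)\leq p-\lfloor p/4\rfloor$ or by direct verification, thereby completing the proof for all odd $p$.
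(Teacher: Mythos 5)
Your reduction is sound as far as it goes: the expansion $\frac{1}{\Phi_{pq}(x)}=\bigl(\sum_{i=0}^{p-1}x^{i}-\sum_{i=0}^{p-1}x^{q+i}\bigr)\sum_{k\geqslant 0}x^{pqk}$, the convolution formula $c_{n}=\sum_{m}a_{m}b_{n-rm}$ with $b$ supported on the two length-$p$ windows $\{0,\dots,p-1\}$ and $\{q,\dots,q+p-1\}$ modulo $pq$, and the passage to a signed count of window hits weighted by the Lam--Leung description are all correct, and they are essentially the same starting point as the paper: its function $\chi_{mr}(i)$ and the identity $c_{i}=\sum_{mr+p+q\geqslant i+1+pq}d_{m}\chi_{mr}(i)$ (Lemma 2.1 and Corollaries 2.2--2.3, imported from the authors' earlier paper) are an equivalent encoding of your window membership. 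But this reduction was already known (it appears in Kaplan and in Gallot--Moree) and by itself yields only Bang-type bounds. The entire content of the theorem lies in the extremal analysis, and that is exactly the step you defer: you state that the ``three-arc bookkeeping together with the three-gap spacing'' is where all the difficulty concentrates, and you give no argument that the signed count cannot exceed $\frac{2}{3}p$. The remark that ``the extremal configuration distributes the indices into three nearly equal groups'' is a heuristic for why $\frac{2}{3}$ should be the right constant, not a proof; and since, as you yourself note, Gallot and Moree show $M(p)\geqslant \frac{2}{3}p(1-\varepsilon)$ for large $p$, no crude or soft estimate can close this gap --- the casework must actually be performed.

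For comparison, the paper's proof of precisely this step runs as follows: assuming $A(pqr)>\frac{2}{3}p$, it first invokes Bzd\c{e}ga's bound $A(pqr)\leqslant\min\{2\alpha+\beta^{*},p-\beta^{*}\}$ to force $\beta^{*}=r_{p}^{*}<\frac{1}{3}p$ under the normalization $1\leqslant p-q_{p}^{*}\leqslant r_{p}^{*}<p-r_{p}^{*}\leqslant q_{p}^{*}$; it then classifies the $p$ residue classes $v$ of Lam--Leung exponents into special, plain and null classes relative to the cutoff $j$ in $\sum_{m\geqslant j}d_{m}\chi_{mr}(i)$, shows the hypothesis forces $|S|-|N|>\frac{1}{3}p$, and in three cases on the largest special class $v_{0}$ derives either $|S|-|N|\leqslant r_{p}^{*}<\frac{1}{3}p$ or a direct congruence contradiction (e.g.\ $\overline{q_{p}}=1$), the key pairing device being the involution $\chi_{mr}(i)=-1\Longleftrightarrow\chi_{(m-r_{p}^{*}q)r}(i)=1$. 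None of these ingredients --- Bzd\c{e}ga's theorem, the consequence $r_{p}^{*}<\frac{1}{3}p$, the special/plain/null pairing combinatorics --- has a counterpart in your outline, so your plan stops where the previously known reductions stop. A further minor inaccuracy: your claim that the count is ``governed only by residue data modulo $p$'' is too strong, since for fixed $p,q$ the individual coefficients depend on $r$ modulo $pq$; only the final upper bound is expressed through $q_{p}^{*}$ and $r_{p}^{*}$.
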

This is the strongest corrected version of Beiter's conjecture
because they also proved that for any $\varepsilon >0$,
$\frac{2}{3}p(1-\varepsilon)\leqslant M(p)\leqslant \frac{3}{4}p$
for every sufficiently large prime $p$. In this paper, we will give
a proof of the Corrected Beiter conjecture.

\section{Main theorem}
First we introduce some notation for the rest of the paper. Let
$p<q<r$ be odd primes. Let $$\Phi_{pqr}(x)=\sum_{i}c_{i}x^{i},$$ and
$$\Phi_{pq}(x)=\sum_{m}d_{m}x^{m}.$$ For $i<0$ or
$i>\phi(pqr)=(p-1)(q-1)(r-1)$, $m<0$ or $m>\phi(pq)=(p-1)(q-1)$, we
set $c_{i}=d_{m}=0$.
\begin{notation}
$\forall n\in \mathbb{Z}$, let $\overline{n}$ be the unique integer
such that $0\leqslant \overline{n}\leqslant pq-1$ and
$\overline{n}\equiv n\pmod{pq}$.
\end{notation}
\begin{definition}
For any $n\in \mathbb{Z}$, define a
map$$\chi_{n}:\mathbb{Z}\longrightarrow \{0,\pm 1\}$$ by
$$\chi_{n}(i)=\left \{\begin{array}{ccc} 1 & & \textrm{if}\ \overline{n+p+q}\geqslant
\overline{i+1}>\overline{n+q}\ \textrm{or}\\
   & & \overline{i+1}\leqslant \overline{n+p+q}<\overline{n+q} \ \textrm{or}\\
   & & \overline{n+p+q}<\overline{n+q}<\overline{i+1},\\
-1 & &  \textrm{if}\ \overline{n+p}\geqslant \overline{i+1}>\overline{n} \ \textrm{or}\\
   & &  \overline{i+1}\leqslant \overline{n+p}<\overline{n} \ \textrm{or}\\
   & &  \overline{n+p}<\overline{n}<\overline{i+1},\\
0  & &  otherwise.
\end{array}\right.$$
\end{definition}
It is easy to certify that the value of $\chi_{n}(i)$ only depends
on $\overline{n}$ and $\overline{i}$. That means for any $n',i'\in
\mathbb{Z},n'\equiv n\pmod{pq},i'\equiv i\pmod{pq}$, we have
$$\chi_{n'}(i')=\chi_{n}(i).$$

With notation as above, now we recall some important results. For
the details, we refer the reader to our previous paper \cite{Zhao}.
\begin{lemma}
We have $$c_{i}=\sum_{mr+p+q\geqslant i+1+pq}d_{m}\chi_{mr}(i).$$
\end{lemma}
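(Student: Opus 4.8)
The plan is to reduce everything to the product formula $\Phi_{pqr}(x)=\Phi_{pq}(x^{r})/\Phi_{pq}(x)$, which holds since $r\nmid pq$, and then to invert $\Phi_{pq}(x)$ in closed form. The key device is the \emph{inverse cyclotomic polynomial} $\Psi_{pq}(x):=(x^{pq}-1)/\Phi_{pq}(x)$, which can be written down explicitly: starting from $\Phi_{pq}(x)=(x^{pq}-1)(x-1)/\bigl((x^{p}-1)(x^{q}-1)\bigr)$ one gets
$$\Psi_{pq}(x)=\frac{(x^{p}-1)(x^{q}-1)}{x-1}=\sum_{j=0}^{p+q-1}x^{j}-\sum_{j=0}^{p-1}x^{j}-\sum_{j=0}^{q-1}x^{j},$$
so that the coefficient $\psi_{s}$ of $x^{s}$ in $\Psi_{pq}$ equals $-1$ for $0\le s\le p-1$, equals $+1$ for $q\le s\le p+q-1$, and vanishes otherwise. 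I would record this computation first, since $\Psi_{pq}$ is exactly the object that produces the three-case sign pattern of $\chi_{n}$.

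Next I would write $\Phi_{pqr}(x)=\Phi_{pq}(x^{r})\,\Psi_{pq}(x)/(x^{pq}-1)$ and expand the last factor as a Laurent series at infinity, $1/(x^{pq}-1)=\sum_{k\ge 1}x^{-kpq}$ (convergent for $|x|>1$). Because $\Phi_{pqr}$ is an honest polynomial, reading off $[x^{i}]$ from this expansion is legitimate and yields
$$c_{i}=\sum_{m}\ \sum_{k\ge 1}\ d_{m}\,\psi_{s},\qquad s=i-mr+kpq.$$
For each $m$, increasing $k$ by $1$ shifts $s$ by $pq>p+q-1$, so at most one term can have $s$ in the support $[0,p+q-1]$ of $\Psi_{pq}$; the constraint $k\ge 1$ selects $s=\overline{i-mr}$ and forces $mr>i$. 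Hence the double sum collapses to $c_{i}=\sum_{mr>i}d_{m}\,\psi_{\overline{i-mr}}$ (terms with $\overline{i-mr}\notin[0,p+q-1]$ contributing $0$), and $s=\overline{i-mr}\le p+q-1$ forces $mr\ge i+(p-1)(q-1)$, i.e.\ the stated range $mr+p+q\ge i+1+pq$; conversely the intermediate $m$ with $i<mr<i+(p-1)(q-1)$ satisfy $\overline{i-mr}\in[p+q,pq-1]$ and contribute nothing, so adopting exactly this range is harmless.

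It then remains to identify $\psi_{\overline{i-mr}}$ with $\chi_{mr}(i)$. Unwinding the definition of $\chi_{n}(i)$ with $n=mr$, the three alternatives defining the value $+1$ say precisely that $\overline{i+1}$ lies in the length-$p$ cyclic arc $(\,\overline{n+q},\,\overline{n+p+q}\,]$ of $\mathbb{Z}/pq\mathbb{Z}$, equivalently $\overline{i-n}\in\{q,\dots,p+q-1\}$; likewise the value $-1$ corresponds to $\overline{i-n}\in\{0,\dots,p-1\}$. Comparing with the support of $\psi$ computed above gives $\psi_{\overline{i-mr}}=\chi_{mr}(i)$ on the nose, and the lemma follows. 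The main obstacle I anticipate is exactly this last bookkeeping: one must verify that the three wrap-around cases in the definition of $\chi_{n}$ match the two intervals on which $\Psi_{pq}$ takes the values $\pm 1$, and that the reduction $s=\overline{i-mr}$ together with the cutoff $k\ge 1$ reproduces the summation condition $mr+p+q\ge i+1+pq$ with no off-by-one error.
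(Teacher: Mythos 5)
Your proposal is correct, and every step checks out: the identity $\Psi_{pq}(x)=(x^{p}-1)(x^{q}-1)/(x-1)$ with support $[0,p-1]\cup[q,p+q-1]$ and signs $-1,+1$ is right; for fixed $m$ at most one $k$ lands $s=i-mr+kpq$ in a window of length $<pq$; the cutoff $k\geqslant 1$ is indeed equivalent to $mr>i$; the intermediate $m$ with $i<mr<i+(p-1)(q-1)$ give $\overline{i-mr}\in[p+q,pq-1]$ and die; and the three wrap-around clauses in the definition of $\chi_{n}(i)$ do say exactly that $\overline{i+1}$ lies in the length-$p$ cyclic arc $(\overline{n+q},\overline{n+p+q}]$ resp. $(\overline{n},\overline{n+p}]$, i.e. $\overline{i-n}\in[q,p+q-1]$ resp. $[0,p-1]$, matching $\psi_{\overline{i-n}}$ on the nose (the two arcs are disjoint since $p<q$). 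One caveat on comparison: this paper does not actually prove the lemma — it defers to the authors' earlier preprint [Zhao], so there is no in-paper argument to measure you against. Your derivation is the standard Kaplan-style one (inverse cyclotomic polynomial plus geometric-series expansion of $1/(x^{pq}-1)$), which is surely the substance of the cited proof; the one genuinely nice choice you make is expanding at infinity, $1/(x^{pq}-1)=\sum_{k\geqslant 1}x^{-kpq}$, rather than at the origin. The origin expansion $-\sum_{k\geqslant 0}x^{kpq}$ would produce the complementary ``head'' sum over $mr\leqslant i$, and one would then need the vanishing identity $\sum_{m}d_{m}\chi_{mr}(i)=0$ (the paper's Corollary 2.2) to convert it into the stated ``tail'' form; your expansion yields the condition $mr+p+q\geqslant i+1+pq$ directly, with Corollary 2.2 then falling out as a byproduct rather than being an ingredient.
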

\begin{corollary}
For any integer $i$, $$\sum_{m}d_{m}\chi_{mr}(i)=0.$$
\end{corollary}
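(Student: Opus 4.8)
The plan is to derive the corollary directly from the lemma by exploiting two facts that are already on the table: first, the value $\chi_{mr}(i)$ depends only on $\overline{i}$ (the remark immediately following the definition), so the summand $d_m\chi_{mr}(i)$ is periodic in $i$ with period $pq$; and second, $c_i = 0$ whenever $i<0$ (the convention fixed at the start of the section). The idea is that the restricted sum in the lemma turns into the full sum of the corollary once $i$ is pushed far enough in the negative direction, while the left-hand side $c_i$ is forced to vanish there.

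Concretely, I would fix an arbitrary integer $i$ and pass to a shifted index $i' = i - Npq$ with $N$ large. By the periodicity remark, $\chi_{mr}(i') = \chi_{mr}(i)$ for every $m$, so $\sum_m d_m\chi_{mr}(i') = \sum_m d_m\chi_{mr}(i)$. Now apply the lemma at $i'$. The summation constraint $mr+p+q \geqslant i'+1+pq$ becomes vacuous once $i'$ is sufficiently negative: since $d_m = 0$ outside $0 \leqslant m \leqslant \phi(pq)$, the quantity $mr+p+q$ is bounded below by $p+q$ on the support of $d_m$, so choosing $N$ so that $i' \leqslant p+q-1-pq = -(p-1)(q-1)$ guarantees that every $m$ with $d_m \neq 0$ satisfies the constraint. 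Hence for such $i'$ the restricted sum equals the full sum $\sum_m d_m\chi_{mr}(i')$. On the other hand $c_{i'} = 0$ because $i'<0$. Combining, $0 = c_{i'} = \sum_m d_m\chi_{mr}(i') = \sum_m d_m\chi_{mr}(i)$, which is exactly the claim.

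The only genuine point to pin down is that the lemma is valid for all integers $i$ (in particular for the negative $i'$ I feed into it) under the convention $c_i=0$ for $i<0$; this is the step I would check most carefully, together with the elementary bound that makes the summation constraint vacuous for $i'$ negative enough. Everything else is bookkeeping. If one wanted an argument not routed through the lemma, an alternative would be to rewrite $\chi_{mr}(i)$ as a difference of indicator functions of two length-$p$ arcs modulo $pq$, recognize the resulting expression as a cyclic convolution on $\mathbb{Z}/pq\mathbb{Z}$, and compute its discrete Fourier transform: the factor arising from $\Phi_{pq}$ vanishes at every primitive $pq$-th root of unity, while the arc factor kills the remaining nonzero frequencies, forcing the total to be zero. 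This is a clean fallback, but the shift-to-negative-$i$ route is shorter and is the one I would write up.
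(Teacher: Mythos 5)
Your derivation is correct, but note that the paper itself contains no proof of this corollary to compare against: Lemma 2.1 and Corollaries 2.2--2.3 are explicitly ``recalled'' from the authors' earlier preprint (reference [11]), so the only question is whether your argument is sound within the framework this paper sets up --- and it is. The conventions are arranged exactly to enable your route: $c_i=0$ is declared for $i<0$, Lemma 2.1 is stated with no restriction on $i$, and the paper records precisely the periodicity fact you invoke (that $\chi_n(i)$ depends only on $\overline{n}$ and $\overline{i}$). Your bookkeeping checks out: on the support of $d_m$ the minimum of $mr+p+q$ is $p+q$, attained at $m=0$ (where $d_0=1\neq 0$), so for $i'\leqslant p+q-1-pq=-(p-1)(q-1)$ the constraint $mr+p+q\geqslant i'+1+pq$ holds for every $m$ with $d_m\neq 0$; the restricted sum in the lemma then equals the full sum, $c_{i'}=0$ forces it to vanish, and periodicity transports the conclusion to arbitrary $i$. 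The caveat you flagged yourself is indeed the only genuine pressure point: the argument leans on Lemma 2.1 being valid at negative $i$, which inside this paper is a matter of how the recalled lemma is stated rather than something you can verify here, since its proof is not reproduced. If you want the corollary independently of that, your Fourier fallback is a legitimate standalone proof and is correct as sketched: writing the sum as $\bigl((\mathbf{1}_{A+q}-\mathbf{1}_{A})*e\bigr)(i+1)$ on $\mathbb{Z}/pq\mathbb{Z}$, where $A$ is an arc of length $p$ and $e(s)=d_{\overline{sr^{-1}}}$, one gets the Fourier factorization $\widehat{e}(\xi)=\Phi_{pq}(\omega^{r\xi})$ times $(\omega^{q\xi}-1)\,\widehat{\mathbf{1}_{A}}(\xi)$ for a primitive $pq$-th root of unity $\omega$; the $\Phi_{pq}$ factor vanishes at all $\xi$ coprime to $pq$, while $\widehat{\mathbf{1}_{A}}(\xi)=0$ when $q\mid\xi$, $\xi\not\equiv 0$, and $\omega^{q\xi}-1=0$ when $p\mid\xi$ (including $\xi=0$), so every frequency dies. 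The shift-to-negative-$i$ write-up is the natural one given the lemma as stated; just make explicit, as you propose, that you are using the lemma at negative $i$ under the stated conventions.
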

\begin{corollary}
We have $$A(pqr)\leqslant
\max_{i,j\in\mathbb{Z}}\left|\sum_{m\geqslant
j}d_{m}\chi_{mr}(i)\right|.$$
\end{corollary}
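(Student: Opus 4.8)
The plan is to read this off directly from the preceding Lemma, which writes each coefficient as the constrained sum $c_i = \sum_{mr+p+q \geqslant i+1+pq} d_m \chi_{mr}(i)$. The key observation is that, for a \emph{fixed} $i$, the summation condition $mr+p+q \geqslant i+1+pq$ is nothing but a lower bound on the summation index $m$. Indeed, it rearranges to $mr \geqslant i+1+pq-p-q$, and since $r$ is a positive integer, dividing by $r$ preserves the direction of the inequality, giving $m \geqslant (i+1+pq-p-q)/r$; as $m$ runs over integers this is equivalent to $m \geqslant j(i)$, where $j(i) := \lceil (i+1+pq-p-q)/r \rceil$. Hence the Lemma takes the form of a one-sided tail sum, $c_i = \sum_{m \geqslant j(i)} d_m \chi_{mr}(i)$.

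From here the argument is immediate. Taking absolute values, $|c_i| = \bigl|\sum_{m \geqslant j(i)} d_m \chi_{mr}(i)\bigr|$, and because the particular pair $(i, j(i))$ is one admissible choice among all pairs $(i,j) \in \mathbb{Z}^2$, we have $|c_i| \leqslant \max_{i,j \in \mathbb{Z}} \bigl|\sum_{m \geqslant j} d_m \chi_{mr}(i)\bigr|$ for every $i$. Maximizing the left-hand side over $i$ and recalling that $A(pqr) = \max_i |c_i|$ by definition then yields the stated bound.

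Since this is just a reparametrization of the summation range followed by a maximization, I do not expect any serious obstacle. The only point needing care is the equivalence between the constraint $mr+p+q \geqslant i+1+pq$ and the clean threshold $m \geqslant j(i)$, which rests on $r$ being positive and $m$ being integer-valued. I would note that the first Corollary, $\sum_m d_m \chi_{mr}(i) = 0$, plays no role at this stage; its purpose is presumably to let one replace a tail $\sum_{m \geqslant j}$ by the complementary head $-\sum_{m < j}$ when estimating, which should matter when this bound is eventually sharpened toward the constant $\frac{2}{3}p$.
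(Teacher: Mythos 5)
Your proposal is correct and is essentially the paper's own derivation: the summation condition $mr+p+q\geqslant i+1+pq$ in Lemma 2.1 is (since $r>0$ and $m\in\mathbb{Z}$) exactly a tail condition $m\geqslant j(i)$ with $j(i)=\lceil (i+1+pq-p-q)/r\rceil$, so each $|c_{i}|$ is one of the tail sums being maximized, and the bound follows; the paper defers this routine verification to its reference \cite{Zhao}, where the same observation is made. Your closing remark is also on target: Corollary 2.2 is indeed used later (at display (2.7)) precisely to trade a tail sum for its complementary head.
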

\begin{remark}
If $q$ and $r$ interchange, we will have similar arguments as above.
\end{remark}

By corollary 2.3, we know it is sufficient for estimating the upper
bound of $A(pqr)$ to consider
$\max_{i,j\in\mathbb{Z}}\left|\sum_{m\geqslant
j}d_{m}\chi_{mr}(i)\right|.$ Therefore we need to study the
coefficients $d_{m}$ of $\Phi_{pq}.$
\begin{notation}
For any distinct primes $p$ and $q$, let $q_{p}^{*}$ be the unique
integer such that $0<q_{p}^{*}<p$ and $qq_{p}^{*}\equiv 1\pmod{p}$.
Let $\overline{q_{p}}$ be the unique integer such that
$0<\overline{q_{p}}<p$ and $q\equiv \overline{q_{p}}\pmod{p}$.
\end{notation}
About the coefficients of $\Phi_{pq}$, Lam and Leung \cite{Lam}
showed
\begin{theorem}[(T.Y. Lam and K.H. Leung, 1996)]
Let $\Phi_{pq}(x)=\sum_{m}d_{m}x^{m}$. For $0\leqslant m\leqslant
\phi(pq)$, we have

(A) $d_{m}=1$ if and only if $m=up+vq$ for some $u\in [0,
p_{q}^{*}-1]$ and $v\in [0, q_{p}^{*}-1]$;

(B) $d_{m}=-1$ if and only if $m+pq=u'p+v'q$ for some $u'\in
[p_{q}^{*}, q-1]$ and $v'\in [q_{p}^{*}, p-1]$;

(C) $d_{m}=0$ otherwise.

The numbers of terms of the former two kinds are, respectively,
$p_{q}^{*}q_{p}^{*}$ and $(q-p_{q}^{*})(p-q_{p}^{*})$, with
difference $1$ since $(p-1)(q-1)=(p_{q}^{*}-1)p+(q_{p}^{*}-1)q$.
\end{theorem}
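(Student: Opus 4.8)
The plan is to establish the exact polynomial identity
$$\Phi_{pq}(x)=\sum_{\substack{0\leqslant u\leqslant p_{q}^{*}-1\\0\leqslant v\leqslant q_{p}^{*}-1}}x^{up+vq}-\sum_{\substack{p_{q}^{*}\leqslant u\leqslant q-1\\q_{p}^{*}\leqslant v\leqslant p-1}}x^{up+vq-pq}$$
and then to check that none of the displayed monomials collide, so that the coefficients $d_{m}$ can be read off termwise to give (A), (B), (C). Write $P(x)$ for the right-hand side. The starting point is the factorization
$$\Phi_{pq}(x)=\frac{(x^{pq}-1)(x-1)}{(x^{p}-1)(x^{q}-1)},$$
valid because $pq$ is a product of two distinct primes.

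First I would record the arithmetic relation $p_{q}^{*}p+q_{p}^{*}q=pq+1$. Since $p_{q}^{*}p+q_{p}^{*}q\equiv1$ modulo both $p$ and $q$, it is $\equiv1\pmod{pq}$, and as $1<p_{q}^{*}p+q_{p}^{*}q\leqslant(q-1)p+(p-1)q<2pq$ it must equal $pq+1$; this is exactly the relation already quoted in the statement. Next, summing the finite geometric progressions in each rectangle and clearing the common denominator, the product $(x^{p}-1)(x^{q}-1)P(x)$ reduces, after setting $a=p_{q}^{*}p$ and $b=q_{p}^{*}q$ and using $x^{-pq}x^{a+b}=x$, to $(1-x^{a})(1-x^{b})-x(1-x^{pq-a})(1-x^{pq-b})$. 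Because $a+b=pq+1$ one has $pq-a+1=b$, $pq-b+1=a$ and $2pq-a-b+1=pq$; expanding and cancelling the $x^{a}$ and $x^{b}$ terms collapses this expression to $(1-x)(1-x^{pq})$. Hence $(x^{p}-1)(x^{q}-1)P(x)=(x^{pq}-1)(x-1)$, and therefore $P(x)=\Phi_{pq}(x)$.

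It remains to pass from this identity to the coefficient statement, which requires that no two monomials of $P$ merge or cancel. The tool is that the map $(u,v)\mapsto up+vq\bmod pq$ is a bijection from $[0,q-1]\times[0,p-1]$ onto $\mathbb{Z}/pq\mathbb{Z}$ (reduction mod $p$ recovers $v$ and reduction mod $q$ recovers $u$). Both index rectangles lie inside $[0,q-1]\times[0,p-1]$, so the exponents inside the first sum are pairwise distinct, those inside the second are pairwise distinct, and the two families are disjoint: an equality $u_{1}p+v_{1}q=u_{2}p+v_{2}q-pq$ forces $(u_{1},v_{1})\equiv(u_{2},v_{2})\pmod{pq}$, hence $(u_{1},v_{1})=(u_{2},v_{2})$, which is impossible since $u_{1}\leqslant p_{q}^{*}-1<p_{q}^{*}\leqslant u_{2}$. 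A direct range check places the first family in $[0,(p-1)(q-1)]$, with maximum $(p_{q}^{*}-1)p+(q_{p}^{*}-1)q=\phi(pq)$, and the shifted second family in $[1,(p-1)(q-1)-1]$. Reading off coefficients now yields (A), (B), (C), together with the counts $p_{q}^{*}q_{p}^{*}$ and $(q-p_{q}^{*})(p-q_{p}^{*})$, whose difference is $1$ by the relation.

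I expect the genuine content to lie in this last step rather than in the algebra: the identity $P=\Phi_{pq}$ only exhibits $\Phi_{pq}$ as a signed sum of monomials, and it is the injectivity of the residue map on the rectangle, together with the two range bounds, that rules out a $+1$ meeting a $-1$ and thereby upgrades the identity to the precise $\{0,\pm1\}$ characterization.
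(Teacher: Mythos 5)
Your proof is correct, but note that the paper itself offers no argument for this statement at all: it is quoted as Theorem 2.4 with a citation to Lam and Leung's 1996 Monthly note, so there is no in-paper proof to compare against. Your reconstruction is essentially the standard one, with one mild stylistic difference from the original: Lam and Leung start from the same factorization $\Phi_{pq}(x)=(x^{pq}-1)(x-1)/((x^{p}-1)(x^{q}-1))$ but expand $1/((1-x^{p})(1-x^{q}))$ as a power series and count representations $m=up+vq$ versus $m+pq=up+vq$, invoking the Chinese Remainder Theorem for the unique $(u,v)\in[0,q-1]\times[0,p-1]$ with $up+vq\equiv m\pmod{pq}$; you instead verify the closed-form identity $P=\Phi_{pq}$ directly by summing the geometric progressions and then use the same CRT injectivity to rule out collisions. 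All the details check out: the key relation $p_{q}^{*}p+q_{p}^{*}q=pq+1$ is established correctly, the cancellation $(1-x^{a})(1-x^{b})-x(1-x^{pq-a})(1-x^{pq-b})=(1-x)(1-x^{pq})$ is right (using $pq-a+1=b$, $pq-b+1=a$, $2pq-a-b+1=pq$), the disjointness argument via $u_{1}<p_{q}^{*}\leqslant u_{2}$ is sound, the range bounds $[0,\phi(pq)]$ and $[1,\phi(pq)-1]$ are correct, and the count difference $p_{q}^{*}q_{p}^{*}-(q-p_{q}^{*})(p-q_{p}^{*})=pq+1-pq=1$ follows from the relation. You are also right about where the content lies: the identity alone gives a signed sum of monomials, and it is the injectivity of $(u,v)\mapsto up+vq\bmod pq$ on the full rectangle that upgrades it to the exact $\{0,\pm1\}$ description.
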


About $A(pqr)$, the best known general upper bound to date is due to
Bart{\l}omiej Bzd\c{e}ga \cite{Bzdega}. He gave the following
important result
\begin{theorem}[(Bart{\l}omiej Bzd\c{e}ga, 2008)]
Set $$\alpha=\min\{q_{p}^{*}, r_{p}^{*}, p-q_{p}^{*},
p-r_{p}^{*}\}$$ and $0<\beta<p$ satisfying $\alpha\beta qr\equiv
1\pmod{p}$. Put $\beta^{*}=\min\{\beta, p-\beta\}$. Then we have
$$A(pqr)\leqslant\min\{2\alpha+\beta^{*}, p-\beta^{*}\}.$$
\end{theorem}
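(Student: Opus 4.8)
The plan is to derive the bound purely from Corollary 2.3 together with the Lam--Leung description of the $d_m$. By Corollary 2.3 it is enough to bound $\bigl|\sum_{m\geqslant j}d_m\chi_{mr}(i)\bigr|$ uniformly in $i,j$. Fix $i$, put $\varepsilon_m=d_m\chi_{mr}(i)\in\{0,\pm1\}$ and $f(j)=\sum_{m\geqslant j}\varepsilon_m$. Corollary 2.2 gives $\sum_m\varepsilon_m=0$, so $f(j)$ is a tail partial sum of a zero-sum sequence of values in $\{0,\pm1\}$, and $A(pqr)\leqslant\max_j|f(j)|$. Everything therefore reduces to understanding the \emph{ordered} pattern of the signs $\varepsilon_m$ as $m$ runs through $[0,(p-1)(q-1)]$ (outside this range $d_m=0$).

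The second step is to make the pattern geometric. Reading the Definition, $\chi_{mr}(i)=+1$ exactly when $\overline{i+1-mr}$ lies in the length-$p$ block $[q+1,p+q]\pmod{pq}$ and $=-1$ exactly when it lies in $[1,p]$; since $\gcd(r,pq)=1$ these conditions pick out two arithmetic progressions $P_+,P_-$ of length $p$ in $m$, each of common difference $r^{-1}\pmod{pq}$, with $P_+=P_--qr^{-1}$. In the Lam--Leung coordinates $U\equiv mp_q^{*}\pmod q$ and $V\equiv mq_p^{*}\pmod p$, the theorem of Lam and Leung places $d_m=+1$ on the rectangle $R_1=[0,p_q^{*}-1]\times[0,q_p^{*}-1]$, $d_m=-1$ on the opposite rectangle $R_2=[p_q^{*},q-1]\times[q_p^{*},p-1]$, and $d_m=0$ on the other two corners of the grid $\{0,\dots,q-1\}\times\{0,\dots,p-1\}$. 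As the index of a progression advances, $V$ increases by $r_p^{*}q_p^{*}\pmod p$ and $U$ by $r_q^{*}p_q^{*}\pmod q$; since $\gcd(r_p^{*}q_p^{*},p)=1$ each progression meets every row exactly once, and because the shift $-qr^{-1}$ is $\equiv0\pmod q$ the set $P_+$ sits directly above $P_-$ by an offset of $r_p^{*}$ rows within each column. Note that $r_p^{*}q_p^{*}\equiv(qr)^{-1}\equiv\alpha\beta\pmod p$, the product in the defining relation $\alpha\beta qr\equiv1$.

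With this picture the four sign types are $P_+\cap R_1$ and $P_-\cap R_2$ (where $\varepsilon=+1$) against $P_-\cap R_1$ and $P_+\cap R_2$ (where $\varepsilon=-1$). After using the $q\leftrightarrow r$ symmetry of the Remark (and a reflection) to reduce to the case $\alpha=r_p^{*}$, the key manoeuvre is to pair the point of $P_-$ in a given column with the point of $P_+$ in the same column, which lies $\alpha$ rows away. On the left column-block $U<p_q^{*}$ only $R_1$ is present and on the right block $U\geqslant p_q^{*}$ only $R_2$, so in each block the pair contributes the difference of the indicators of two rectangle-rows offset by $\alpha$; these differences essentially telescope, leaving a cumulative defect of size at most $\alpha$ per block, hence the $2\alpha$ term. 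The residual $\beta^{*}=\min\{\beta,p-\beta\}$ arises from the single seam where the progression wraps in the $U$-direction and the column pairing breaks, and it measures the size of that defect. The complementary estimate $|f(j)|\leqslant p-\beta^{*}$ is obtained by running the same count on $-\sum_{m<j}\varepsilon_m$ (Corollary 2.2) and measuring the region outside the overlap of the two windows, whose total width is $p$. Taking the minimum of the two estimates gives the theorem.

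The step I expect to be the real obstacle is the exact accounting behind the previous paragraph: proving that full ``periods'' of the pairing cancel and that precisely two boundary contributions of size $\alpha$ together with one seam contribution of size $\beta^{*}$ survive. This is a three-distance / continued-fraction style count, delicate because the horizontal position $U$ is \emph{not} an affine function of the row $V$ (it mixes reduction mod $p$ with reduction mod $q$), so the endpoints of each rectangle-row intersection, and the order in which the integer $m$ visits them, must be tracked with care; a single off-by-one in those endpoints would corrupt the constant in front of $\alpha$ or the $\beta^{*}$ term. By contrast the reductions of the first two paragraphs are routine once the coordinates $m=up+vq$ are fixed.
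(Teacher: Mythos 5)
Your first two paragraphs are correct and essentially reproduce the machinery this paper already sets up (Lemma 2.1, Corollaries 2.2--2.3, Theorem 2.4): reducing $A(pqr)$ to tail sums $f(j)=\sum_{m\geqslant j}d_{m}\chi_{mr}(i)$, identifying the supports of $\chi_{mr}(i)=\pm 1$ as two length-$p$ progressions $P_{\pm}$ of common difference $r^{-1}\pmod{pq}$ with $P_{+}=P_{-}-qr^{-1}$, and passing to Lam--Leung coordinates, where your shift-by-$r_{p}^{*}$ column pairing is exactly the paper's relation (2.4), and the identification $r_{p}^{*}q_{p}^{*}\equiv(qr)^{-1}\equiv\alpha\beta\pmod{p}$ is right. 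Note, however, that the paper itself gives no proof of this statement --- it is quoted from Bzd\c{e}ga \cite{Bzdega} --- so the only question is whether your third paragraph closes the argument, and it does not: the whole content of the theorem is the claim that the defects of the pairing total at most $2\alpha+\beta^{*}$, and you assert this (and the complementary bound $p-\beta^{*}$) rather than prove it, as you yourself concede in the final paragraph.

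To make the gap concrete: the tail $\{m\geqslant j\}$ is an interval in the \emph{integer} ordering of $m$, while the pairing $m\mapsto m-r_{p}^{*}q$ moves points by $-qr^{-1}$ modulo $pq$; a pair contributes to $f(j)$ exactly when it is ``broken'', i.e.\ exactly one member lies in $\{m\geqslant j\}\cap[0,\phi(pq)]$, and each of the two rectangles $R_{1},R_{2}$ plus the wrap of each progression produces its own family of broken pairs. Without the three-distance-style accounting you name but do not perform, nothing rules out an estimate like $2\alpha+2\beta^{*}$, or no nontrivial bound at all; and the phrase ``measuring the region outside the overlap of the two windows'' is not an argument for $p-\beta^{*}$. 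Two further unproved reductions matter: (i) the symmetry step ``reduce to $\alpha=r_{p}^{*}$'' is only half available --- the $q\leftrightarrow r$ swap (the paper's Remark 1) exchanges $q_{p}^{*}$ and $r_{p}^{*}$, but no symmetry of $\Phi_{pqr}$ obviously replaces $r_{p}^{*}$ by $p-r_{p}^{*}$ while fixing the rest (palindromicity $c_{i}=c_{\phi(pqr)-i}$ acts on $i$, not on these residues), so the cases $\alpha=p-r_{p}^{*}$, $\alpha=p-q_{p}^{*}$ are not dispatched; and (ii) in your reduced case the congruence forces $\beta=q_{p}^{*}$, so the seam defect must be shown to equal $\min\{q_{p}^{*},p-q_{p}^{*}\}$ exactly, which is again asserted without proof. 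As it stands the proposal is a plausible roadmap, close in spirit to Bzd\c{e}ga's actual argument, but the theorem's substance is in precisely the step you left open.
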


Now we can prove our main theorem.
\begin{theorem}[(Corrected Beiter conjecture)]
Assume $p<q<r$ are odd primes. Let
$\Phi_{pqr}(x)=\sum_{i}c_{i}x^{i}$ and
$A(pqr)=\max{\{|c_{i}|,0\leqslant i\leqslant \phi(pqr)\}}$. Then we
have $A(pqr)\leqslant \frac{2}{3}p$.
\end{theorem}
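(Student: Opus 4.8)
The plan is to work entirely from the estimate $A(pqr)\le\max_{i,j\in\mathbb Z}\bigl|\sum_{m\ge j}d_m\chi_{mr}(i)\bigr|$ (Corollary 2.3), so that the whole problem reduces to the following: for each fixed $i$, bound the absolute values of all the suffix sums of the integer sequence $f(m):=d_m\chi_{mr}(i)$, $0\le m\le\phi(pq)$. Each $f(m)$ lies in $\{0,\pm1\}$, since $d_m\in\{0,\pm1\}$ by Lam and Leung's theorem and $\chi_{mr}(i)\in\{0,\pm1\}$ by definition. Writing $S_j=\sum_{m\ge j}f(m)$, the identity $\sum_m d_m\chi_{mr}(i)=0$ (Corollary 2.2) forces the walk $(S_j)$ to begin and end at $0$; what I must control is its maximal deviation $\max_j|S_j|$, and I want to show this never exceeds $\frac23 p$. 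The crude bound is only the number of positive (equivalently, by balance, negative) steps, which is of order $p$, so extracting the factor $\frac23$ is the real content.

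Next I would make the combinatorics explicit. Through the parametrisation $m\equiv up+vq\pmod{pq}$ underlying Lam and Leung's theorem, the indices with $d_m=+1$ and with $d_m=-1$ are exactly two disjoint rectangles in the lattice $\mathbb Z/q\times\mathbb Z/p$. Unwinding the definition of $\chi$, one checks that $\chi_{mr}(i)=+1$ precisely when $mr\bmod pq$ falls in one length-$p$ cyclic window, and $\chi_{mr}(i)=-1$ when it falls in a second window obtained from the first by a shift of $q$; since $mr\bmod pq$ is an affine function of $(u,v)$ (through $r\bmod p$ and $r\bmod q$), each sign class of $f$ becomes the set of lattice points of a rectangle lying in the affine preimage of a window. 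Thus $\max_j|S_j|$ becomes a question of how evenly the $+1$ and $-1$ lattice points interleave as $m$ runs through $[0,\phi(pq)]$. Before attacking this I would first dispose of the easy parameter ranges with Bzd\c{e}ga's theorem $A(pqr)\le\min\{2\alpha+\beta^*,\,p-\beta^*\}$: whenever $\beta^*\ge\frac13 p$ we already get $A(pqr)\le p-\beta^*\le\frac23 p$, and likewise whenever $2\alpha+\beta^*\le\frac23 p$. This leaves only the central regime $\beta^*<\frac13 p$ together with $2\alpha+\beta^*>\frac23 p$, in which the direct count must be carried out.

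The heart of the argument, and the step I expect to be the main obstacle, is the suffix-sum control in this remaining regime. Here the total balance between $+1$ and $-1$ contributions is automatic, but the \emph{order} in which they occur is governed by the fine interaction between $r\bmod pq$ and the two Lam--Leung rectangles, and it is this ordering that decides whether some suffix can build up a large one-sided excess. The plan is to slice $[0,\phi(pq)]$ into arithmetic progressions -- the natural candidates being the residue classes modulo $q$, i.e.\ fixing the coordinate $u$, on each of which the pair $(d_m,\chi_{mr}(i))$ follows a predictable near-periodic pattern -- to bound the net contribution of each slice by a quantity controlled by $\alpha$ and $\beta^*$, and then to sum the slice contributions while tracking the running total. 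Making these per-slice estimates precise, and showing that the accumulated imbalance is pinned at $\frac23 p$ rather than the trivial $O(p)$ -- including the boundary slices where the windows wrap around modulo $pq$ -- is where the genuine work lies; the constant $\frac23$ should emerge as the worst case of combining an $\alpha$-type and a $\beta^*$-type contribution, matching exactly the threshold below which Bzd\c{e}ga's bound ceases to suffice.
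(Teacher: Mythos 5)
Your reduction steps are all sound and in fact coincide with the paper's opening moves: Corollary 2.3 to pass to suffix sums of $f(m)=d_m\chi_{mr}(i)$, Lam--Leung to parametrise the support of $d_m$ as two lattice rectangles via $m=up+vq$, and Bzd\c{e}ga's theorem to restrict attention to the regime $\beta^*<\frac13 p$ (the paper uses exactly this: $A(pqr)>\frac23 p$ together with $A\le p-\beta^*$ forces $\beta^*<\frac13 p$). But everything after that is a statement of intent, not an argument, and the intent as formulated would not succeed. Your plan is to slice by the coordinate $u$ and ``bound the net contribution of each slice by a quantity controlled by $\alpha$ and $\beta^*$.'' Per-slice net contributions are already trivially bounded: since the $\chi$-windows have length $p$, each residue class meets each window at most once, so every slice contributes at most $\pm1$ to any suffix sum. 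The trivial bound is therefore (number of slices) $\approx p$ (or $q$ in your slicing, which is worse), and no per-slice refinement can extract the constant $\frac23$: the entire difficulty is bounding the \emph{number} of slices that contribute coherently with one sign, which is a global interleaving question that per-slice estimates cannot see.

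The missing idea, which is the actual core of the paper's proof, is the pairing relation (2.4): $\chi_{mr}(i)=-1\Longleftrightarrow\chi_{(m-r_p^*q)r}(i)=1$, i.e.\ the two windows defining $\chi$ differ by a shift of $q$, so the class $v$ is coupled to the class $v+r_p^*$. The paper slices by $v$ (giving exactly $p$ classes), classifies each class as special, plain, or null according to how its $+1$ and $-1$ terms straddle the cut $j$, shows the assumption $A(pqr)>\frac23 p$ forces $|S|-|N|>\frac13 p$ unbalanced classes, and then --- via a three-case analysis on the largest special class $v_0$ and a delicate bookkeeping of congruences modulo $q$ among the $u$-coordinates (equations (2.9)--(2.22) and their analogues) --- proves that the special classes are confined to a window of length $r_p^*=\beta^*<\frac13 p$, a contradiction. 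Nothing in your proposal supplies a mechanism playing the role of (2.4) or of the confinement argument; you correctly locate where ``the genuine work lies,'' but that work is precisely the theorem, and the proposal leaves it undone.
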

\begin{proof}
Suppose $A(pqr)>\frac{2}{3}p$, we will show that this is a
contradiction. Let us first assume $$1\leqslant p-q_{p}^{*}\leqslant
r_{p}^{*}<p-r_{p}^{*}\leqslant q_{p}^{*}\leqslant p-1.\eqno(2.1)$$
By Remark 1, we can observe that the proof is similar for the other
cases. According to Theorem 2.5, it follows $\alpha=p-q_{p}^{*}$,
$\beta=p-r_{p}^{*}$ and $\beta^{*}=r_{p}^{*}$. Since
$A(pqr)>\frac{2}{3}p$, so we easily get
$$\beta^{*}<\frac{1}{3}p.\eqno(2.2)$$ By Corollary 2.3, we know there
exist a pair of integers $i, j$ such that $$\left|\sum_{m\geqslant
j}d_{m}\chi_{mr}(i)\right|>\frac{2}{3}p.\eqno(2.3)$$ By Theorem 2.4,
we can divide the nonzero terms of $\Phi_{pq}(x)$ into $p$ classes
depending on the value of $v$ or $v'$. From the definition of
$\chi_{n}$, we can simply verify that for any given class, there is
at most one term such that $\chi_{mr}(i)=1$. For the case
$\chi_{mr}=-1$, we have the similar result.

Since $(up+vq)r+p\equiv (up+(v-r_{p}^{*})q)r+p+q\pmod{pq}$, it
follows
$$\chi_{mr}(i)=-1\Longleftrightarrow \chi_{(m-r_{p}^{*}q)r}(i)=1.\eqno(2.4)$$
We claim that $$\sum_{m\geqslant
j}d_{m}\chi_{mr}(i)<-\frac{2}{3}p.\eqno(2.5)$$ By (2.3), we know
$\sum_{m\geqslant j}d_{m}\chi_{mr}(i)>\frac{2}{3}p$ or
$\sum_{m\geqslant j}d_{m}\chi_{mr}(i)<-\frac{2}{3}p$. However the
number of the classes of $d_{m}=-1$ is just $p-q_{p}^{*}$, hence
$$\sum_{m\geqslant j, d_{m}=-1}d_{m}\chi_{mr}(i)\leqslant
\lfloor\frac{1}{3}p\rfloor.$$ If the former holds, then
$$\sum_{m\geqslant j, d_{m}=1}d_{m}\chi_{mr}(i)\geqslant
\lfloor\frac{1}{3}p\rfloor+1\eqno(2.6)$$ Thus there must exist
$u\in[0, p_{q}^{*}-1]$ and $v\in[0,
q_{p}^{*}-1-\lfloor\frac{1}{3}p\rfloor]$ such that
$\chi_{(up+vq)r}(i)=1$. By (2.4), we have
$\chi_{(up+(v+r_{p}^{*})q)r}(i)=-1$ and $v+r_{p}^{*}\in[0,
q_{p}^{*}-1]$. Hence
$d_{up+vq}\chi_{(up+vq)r}(i)+d_{up+(v+r_{p}^{*})q}\chi_{(up+(v+r_{p}^{*})q)r}(i)=0$,
their contributions to the left side of (2.6) are zero. This is a
contradiction, so we establish our claim.

With the arguments above, now we can give the following definition.
For any class $v$, $v\in [0, q_{p}^{*}-1]$, if there exist $u_{1},
u_{2}\in[0, p_{q}^{*}-1]$ such that $u_{1}p+vq\geqslant
j>u_{2}p+vq$, $\chi_{(u_{1}p+vq)r}(i)=-1$ and
$\chi_{(u_{2}p+vq)r}(i)=1$, then we say it is a special class. If
there exists either $u_{1}$ or $u_{2}$ satisfying the above
conditions, we say it is a plain class. If there exists neither
$u_{1}$ nor $u_{2}$ satisfying the above conditions, we say it is a
null class. Similarly for any class $v'$, $v'\in [q_{p}^{*}, p-1]$,
if there exist $u'_{1}, u'_{2}\in[p_{q}^{*}, q-1]$ such that
$u'_{1}p+v'q-pq\geqslant j>u'_{2}p+v'q-pq$,
$\chi_{(u'_{1}p+v'q-pq)r}(i)=1$ and
$\chi_{(u'_{2}p+v'q-pq)r}(i)=-1$, then we say it is a special class.
If there exists either $u'_{1}$ or $u'_{2}$ satisfying the above
conditions, we say it is a plain class. If there exists neither
$u'_{1}$ nor $u'_{2}$ satisfying the above conditions, we say it is
a null class. Let $S$, $P$ and $N$ denote the sets of the special
classes, the plain classes and the null classes respectively.

By Corollary 2.2 and (2.3), we immediately obtain
$$\left|\sum_{m<j}d_{m}\chi_{mr}(i)\right|>\frac{2}{3}p.\eqno(2.7)$$
By (2.3) and (2.7), it is easy to verify that
$$|S|-|N|>\frac{1}{3}p.\eqno(2.8)$$ The number of the classes
$v'$, $v'\in [q_{p}^{*}, p-1]$ is just $p-q_{p}^{*}$, so there must
exist at least one $v$, $v\in [0, q_{p}^{*}-1]$ such that $v\in S$.
Let $v_{0}$ be the largest value of $v\in [0, q_{p}^{*}-1]$ such
that $v\in S$. Next we will consider three cases according to the
value of $v_{0}$ and derive a contradiction to (2.8) to complete the
proof.

\bigskip
\textbf{Case 1.} $q_{p}^{*}-r_{p}^{*}\leqslant v_{0}\leqslant
q_{p}^{*}-1$

First we claim that for any class $v$, $v\in [0, q_{p}^{*}-1]$, if
$v\in S$, then $v_{0}-r_{p}^{*}+1\leqslant v\leqslant v_{0}$.

Obviously we only need to show the first inequality. Suppose
$0\leqslant v\leqslant v_{0}-r_{p}^{*}$ and $v\in S$. By the
definition of the special class, we know there exist $u_{2},
u_{3}\in[0, p_{q}^{*}-1]$ such that $$u_{2}p+v_{0}q<j,
\chi_{(u_{2}p+v_{0}q)r}(i)=1$$ and
$$u_{3}p+vq\geqslant j, \chi_{(u_{3}p+vq)r}(i)=-1.$$ This yields
$$u_{3}p+vq>u_{2}p+v_{0}q,$$ hence $$(u_{3}-u_{2})p>(v_{0}-v)q\geqslant
r_{p}^{*}q.$$ On the other hand, $$(u_{3}-u_{2})p\leqslant
(p_{q}^{*}-1)p=(p-q_{p}^{*})q-p+1\leqslant r_{p}^{*}q-p+1.$$ The
equality holds because $(p-1)(q-1)=(p_{q}^{*}-1)p+(q_{p}^{*}-1)q$.
Therefore we derive a contradiction and prove our claim.

Now we consider the classes $v'$, $v'\in [q_{p}^{*}, p-1]$. Since
$v_{0}\in S$, $v'_{1}=v_{0}+r_{p}^{*}\notin S$. If not, then there
exists $u'_{2}\in[p_{q}^{*}, q-1]$ such that
$\chi_{(u'_{2}p+(v_{0}+r_{p}^{*})q-pq)r}(i)=-1$. By (2.4), we get
$\chi_{((u'_{2}-q)p+v_{0}q)r}(i)=1$. On the other hand, there exists
$u_{2}\in[0, p_{q}^{*}-1]$ such that $\chi_{(u_{2}p+v_{0}q)r}(i)=1$.
By the definition of $\chi_{n}$, we have $q\mid (u'_{2}-u_{2})$,
however it is impossible.

Suppose $q_{p}^{*}\leqslant v'_{2}\leqslant v_{0}+r_{p}^{*}-1$ and
$v'_{2}\in S$. Similarly we know
$v'_{2}-r_{p}^{*}\in[v_{0}-r_{p}^{*}+1, v_{0}]$, but
$v'_{2}-r_{p}^{*}\notin S$. Moreover, if $v'_{2}-r_{p}^{*}\in N$,
then the contributions of these two classes to the left side of
(2.8) are zero, thus we can ignore them. If $v'_{2}-r_{p}^{*}\in P$,
then we say the class $v'_{2}$ is a valid special class. Let $S_{0}$
denote the set of the valid special classes.

Suppose $v_{0}+r_{p}^{*}+1\leqslant v'_{3}\leqslant p-1$ and
$v'_{3}\in S_{0}$. We claim that
$2v_{0}+r_{p}^{*}-v'_{3}\in[v_{0}-r_{p}^{*}+1, v_{0}]$ and
$2v_{0}+r_{p}^{*}-v'_{3}\notin S$.

Since $v_{0}\in S$, there exist $u_{1}, u_{2}\in[0, p_{q}^{*}-1]$
such that $u_{1}p+v_{0}q\geqslant j>u_{2}p+v_{0}q$,
$\chi_{(u_{1}p+v_{0}q)r}(i)=-1$ and $\chi_{(u_{2}p+v_{0}q)r}(i)=1$.
This implies that
$$\overline{(u_{1}p+v_{0}q)r+p+\overline{q_{p}}}=\overline{(u_{2}p+v_{0}q)r+p+q}\eqno(2.9)$$
or
$$\overline{(u_{1}p+v_{0}q)r+p-(p-\overline{q_{p}})}=\overline{(u_{2}p+v_{0}q)r+p+q}.\eqno(2.10)$$
Since $v'_{3}\in S$, there exist $u'_{3}, u'_{4}\in[p_{q}^{*}, q-1]$
such that $u'_{3}p+v'_{3}q-pq\geqslant j>u'_{4}p+v'_{3}q-pq$,
$\chi_{(u'_{3}p+v'_{3}q-pq)r}(i)=1$ and
$\chi_{(u'_{4}p+v'_{3}q-pq)r}(i)=-1$. This implies that
$$\overline{(u'_{3}p+v'_{3}q-pq)r+p+q-\overline{q_{p}}}=\overline{(u'_{4}p+v'_{3}q-pq)r+p}\eqno(2.11)$$
or
$$\overline{(u'_{3}p+v'_{3}q-pq)r+p+q+(p-\overline{q_{p}})}=\overline{(u'_{4}p+v'_{3}q-pq)r+p}.\eqno(2.12)$$
If (2.9) and (2.11) hold simultaneously, then we get
$$\overline{(u_{1}+u'_{3})pr}=\overline{(u_{2}+u'_{4})pr}.$$ Hence
$$q\mid (u_{1}+u'_{3}-u_{2}-u'_{4}).$$ This is impossible. Similarly
(2.10) and (2.12) can not hold simultaneously either, so without
loss of generality we assume (2.10) and (2.11) are correct. By
(2.4), we have $\chi_{(u'_{4}p+v'_{3}q-pq-r_{p}^{*}q)r}(i)=1$.
Because $v'_{3}\in S_{0}$, we know there exists $u_{5}\in[0,
p_{q}^{*}-1]$ such that $u_{5}p+(v'_{3}-r_{p}^{*})q\geqslant j$ and
$\chi_{(u_{5}p+(v'_{3}-r_{p}^{*})q)r}(i)=-1$. Hence
$$\overline{(u_{5}p+(v'_{3}-r_{p}^{*})q)r+p+\overline{q_{p}}}=\overline{((u'_{4}-q)p+(v'_{3}-r_{p}^{*})q)r+p+q}\eqno(2.13)$$
or
$$\overline{(u_{5}p+(v'_{3}-r_{p}^{*})q)r+p-(p-\overline{q_{p}})}=\overline{((u'_{4}-q)p+(v'_{3}-r_{p}^{*})q)r+p+q}.\eqno(2.14)$$
If (2.14) holds, by (2.10) we get
$$\overline{(u_{5}-u_{1})pr}=\overline{(u'_{4}-q-u_{2})pr}.$$ Hence $$q\mid
(u_{5}+u_{2}-u_{1}-u'_{4}).\eqno(2.15)$$ On the other hand, by
$$u_{5}p+(v'_{3}-r_{p}^{*})q\geqslant j>u'_{4}p+v'_{3}q-pq$$ we have
$$0>(u_{5}-u'_{4})p>(r_{p}^{*}-p)q.$$ Note that
$$0>(u_{2}-u_{1})p\geqslant
-(p_{q}^{*}-1)p=-(p-q_{p}^{*})q+p-1\geqslant -r_{p}^{*}q+p-1,$$ so
we can get $$0>(u_{5}+u_{2}-u_{1}-u'_{4})p>-pq+p-1.$$ This
contradicts (2.15) and establishes the validity of (2.13).

Now if $2v_{0}+r_{p}^{*}-v'_{3}\in S$, then there exist $u_{7},
u_{8}\in[0, p_{q}^{*}-1]$ such that
$u_{7}p+(2v_{0}+r_{p}^{*}-v'_{3})q\geqslant
j>u_{8}p+(2v_{0}+r_{p}^{*}-v'_{3})q$,
$\chi_{(u_{7}p+(2v_{0}+r_{p}^{*}-v'_{3})q)r}(i)=-1$ and
$\chi_{(u_{8}p+(2v_{0}+r_{p}^{*}-v'_{3})q)r}(i)=1$. By (2.10), we
have
$$\overline{(u_{7}p+(2v_{0}+r_{p}^{*}-v'_{3})q)r+p-(p-\overline{q_{p}})}
=\overline{(u_{8}p+(2v_{0}+r_{p}^{*}-v'_{3})q)r+p+q}.\eqno(2.16)$$

We recall that $\overline{q_{p}}\leqslant \frac{p-1}{3}$ and refer
the reader to the proof of the main result in \cite{Zhao}. Combining
(2.10), (2.13) and (2.16) yields
\begin{equation*}
\begin{split}
&\overline{((u'_{4}-q)p+(v'_{3}-r_{p}^{*})q)r+p+q}+\overline{(u_{8}p+(2v_{0}+r_{p}^{*}-v'_{3})q)r+p+q}\\
=&\overline{2((u_{2}p+v_{0}q)r+p+q)}
\end{split}
\end{equation*}
Hence $$q\mid u'_{4}+u_{8}-2u_{2}.\eqno(2.17)$$ Moreover, we have
$u_{7}>u_{2}$ because $u_{7}p+(2v_{0}+r_{p}^{*}-v'_{3})q\geqslant
j>u_{2}p+v_{0}q$ and $2v_{0}+r_{p}^{*}-v'_{3}<v_{0}$. It follows
$$u'_{4}-u_{2}>u_{1}-u_{2}=u_{7}-u_{8}>u_{2}-u_{8}.$$
The equality is easily obtained by (2.10) and (2.16). Therefore
$$u'_{4}+u_{8}-2u_{2}=q,$$ and $$2(q-u'_{4})=2(u_{8}-2u_{2})\leqslant 2(p_{q}^{*}-1).\eqno(2.18)$$
On the other hand, by (2.11) and (2.13) we get $$q\mid
u'_{3}+u_{5}-2u'_{4}.$$ In view of the ranges of $u'_{3}$, $u'_{4}$
and $u_{5}$, we certainly have $$u'_{3}+u_{5}-2u'_{4}=0,$$ and
$$2(q-u'_{4})>q+u'_{3}-2u'_{4}=q-u_{5}\geqslant q-p_{q}^{*}+1.\eqno(2.19)$$
By (2.18) and (2.19), we get $$p_{q}^{*}-1>\frac{1}{3}q.$$ Since
$q_{p}^{*}>\frac{2}{3}p$, it is a contradiction, so we prove our
claim.

Finally, we need to show that if
$2(v_{0}+r_{p}^{*})-v'_{3}\in[q_{p}^{*}, v_{0}+r_{p}^{*}-1]$, then
$v'_{3}\in S_{0}$ and $2(v_{0}+r_{p}^{*})-v'_{3}\in S_{0}$ can not
hold simultaneously. Otherwise, there exist $u'_{5},
u'_{6}\in[p_{q}^{*}, q-1]$ such that
$u'_{5}p+(2(v_{0}+r_{p}^{*})-v'_{3})q-pq\geqslant
j>u'_{6}p+(2(v_{0}+r_{p}^{*})-v'_{3})q-pq$,
$\chi_{(u'_{5}p+(2(v_{0}+r_{p}^{*})-v'_{3})q-pq)r}(i)=1$ and
$\chi_{(u'_{6}p+(2(v_{0}+r_{p}^{*})-v'_{3})q-pq)r}(i)=-1$. By (2.4),
we have $$\chi_{(u'_{6}p+(2v_{0}+r_{p}^{*}-v'_{3})q-pq)r}(i)=1.$$
Because $2(v_{0}+r_{p}^{*})-v'_{3}\in S_{0}$, we know there exists
$u_{9}\in[0, p_{q}^{*}-1]$ such that
$u_{9}p+(2v_{0}+r_{p}^{*}-v'_{3})q\geqslant j$ and
$$\chi_{(u_{9}p+(2v_{0}+r_{p}^{*}-v'_{3})q)r}(i)=-1.$$ Hence it
follows
$$\overline{(u_{9}p+(2v_{0}+r_{p}^{*}-v'_{3})q)r+p+\overline{q_{p}}}=\overline{((u'_{6}-q)p+(2v_{0}+r_{p}^{*}-v'_{3})q)r+p+q}.\eqno(2.20)$$
Combining (2.10), (2.13) and (2.20) yields
\begin{equation*}
\begin{split}
&\overline{((u'_{4}-q)p+(v'_{3}-r_{p}^{*})q)r+p+q}+\overline{(u_{9}p+(2v_{0}+r_{p}^{*}-v'_{3})q)r+p}\\
=&\overline{(u_{1}p+v_{0}q)r+p}+\overline{(u_{2}p+v_{0}q)r+p+q}
\end{split}
\end{equation*}
Hence $$q\mid u'_{4}+u_{9}-u_{1}-u_{2}.\eqno(2.21)$$ Moreover
$u_{9}>u_{2}$ and $u'_{4}>u_{1}$, so $$u'_{4}+u_{9}-u_{1}-u_{2}=q,$$
and $$2(q-u'_{4})=2(u_{9}-u_{1}-u_{2})\leqslant
2(p_{q}^{*}-1).\eqno(2.22)$$ By (2.19) and (2.22), we get a
contradiction and establish the claim. Now combining our arguments
above shows that $$|S|-|N|\leqslant r_{p}^{*}<\frac{1}{3}p.$$ This
contradicts (2.8) and completes the proof. In the remaining two
cases, the methods which will be used are similar to the present
case, so we will introduce our ideas and omit the straightforward
details.

\bigskip
\textbf{Case 2.} $r_{p}^{*}\leqslant v_{0}\leqslant
q_{p}^{*}-r_{p}^{*}-1$

First we know for any class $v$, $v\in [0, q_{p}^{*}-1]$, if $v\in
S$, then $v_{0}-r_{p}^{*}+1\leqslant v\leqslant v_{0}$.

Suppose $v_{0}-r_{p}^{*}+1\leqslant v_{1}\leqslant
2v_{0}+r_{p}^{*}-q_{p}^{*}$ and $v_{1}\in S$. Then
$2v_{0}-v_{1}+r_{p}^{*}\geqslant q_{p}^{*}$ and
$2v_{0}-v_{1}+r_{p}^{*}\notin S_{0}$.

For any class $v_{2}$, $2v_{0}+r_{p}^{*}-q_{p}^{*}+1\leqslant
v_{2}\leqslant v_{0}-1$, we consider the class $2v_{0}-v_{2}$.
Obviously, $2v_{0}-v_{2}\notin S$. If $2v_{0}-v_{2}\in P$, then the
class $2v_{0}-v_{2}+r_{p}^{*}\in N$ or $2v_{0}-v_{2}-r_{p}^{*}\in
N$. Combining the arguments above, we get
$$|S|-|N|\leqslant \max{\{r_{p}^{*}, p-q_{p}^{*}+1\}}.$$
By (2.8), it follows $|S|-|N|=p-q_{p}^{*}+1$, so
$$p-q_{p}^{*}=r_{p}^{*}=\frac{p-1}{3}, v_{0}=r_{p}^{*}.\eqno(2.23)$$
If $1\leqslant v_{3}\leqslant v_{0}-1$ and $v_{3}\in S$, then
$v_{3}-r_{p}^{*}+p\in[q_{p}^{*}+1, p-1]$ and
$v_{3}-r_{p}^{*}+p\notin S$. Thus we have
$$|S|=p-q_{p}^{*}+1, |N|=0.\eqno(2.24)$$ We claim that for any
class $v_{4}$, $v_{0}\leqslant v_{4}\leqslant q_{p}^{*}-1$, there
must exist $u_{1}\in[0, p_{q}^{*}-1]$ such that
$u_{1}p+v_{4}q\geqslant j$ and $\chi_{(u_{1}p+v_{4}q)r}(i)=-1$.
Obviously it holds for the classes $v_{0}$ and $q_{p}^{*}-1$. If it
is not correct for a class $v_{4}\in[v_{0}+1, q_{p}^{*}-2]$, then
for the class $v_{4}-r_{p}^{*}$, by (2.4), there does not exist
$u_{4}\in[0, p_{q}^{*}-1]$ such that $u_{4}p+(v_{4}-r_{p}^{*})q<j$
and $\chi_{(u_{4}p+(v_{4}-r_{p}^{*})q)r}(i)=1$. Therefore
$v_{4}-r_{p}^{*}\notin S$, $v_{4}-r_{p}^{*}\in P$. Then there exists
$u_{3}\in[0, p_{q}^{*}-1]$ such that
$u_{3}p+(v_{4}-r_{p}^{*})q\geqslant j$ and
$\chi_{(u_{3}p+(v_{4}-r_{p}^{*})q)r}(i)=-1$. It is not difficult to
obtain that $v_{4}-2r_{p}^{*}+p\notin S$. This implies $|S|\leqslant
p-q_{p}^{*}$. It contradicts (2.24) and establishes the claim.

By (2.24), we know $q_{p}^{*}\in S_{0}$. Similar to the arguments in
case 1, we can get $2v_{0}+r_{p}^{*}-q_{p}^{*}=r_{p}^{*}-1\notin S$.
Thus $p-1\in S$. Moreover, by the previous claim, we know $p-1\in
S_{0}$. In fact, for any $v'_{1}\in[q_{p}^{*}+1, p-2]$, we have
$v'_{1}\in S_{0}$. Otherwise, $v'_{1}\in P$, then
$v'_{1}+r_{p}^{*}-p\in S$ and $2v_{0}+r_{p}^{*}-v'_{1}\notin S$.
This implies the class $2v_{0}-v'_{1}+p\in S$. However,
$v'_{1}+r_{p}^{*}-p\in S$ and $2v_{0}-v'_{1}+p\in S$ can not hold
simultaneously.

With the arguments above, we know there exist $u'_{1},
u'_{2}\in[p_{q}^{*}, q-1]$ such that
$\chi_{(u'_{1}p+(p-1)q-pq)r}(i)=1$ and
$\chi_{(u'_{2}p+(p-1)q-pq)r}(i)=-1$. By (2.11), we get
$$\overline{(u'_{1}p+(p-1)q-pq)r+p+q-\overline{q_{p}}}=\overline{(u'_{2}p+(p-1)q-pq)r+p}.\eqno(2.25)$$
By (2.4), we have $\chi_{((u'_{2}-q)p+2r_{p}^{*}q)r}(i)=1$ and
$\chi_{(u'_{1}p+(r_{p}^{*}-1)q)r}(i)=-1$. By the claim above, we
know there exist $u_{5}, u_{8}\in[0, p_{q}^{*}-1]$ such that
$\chi_{(u_{5}p+2r_{p}^{*}q)r}(i)=-1$ and
$\chi_{(u_{8}p+(r_{p}^{*}-1)q)r}(i)=1$. By (2.13), we get
$$\overline{(u_{5}p+2r_{p}^{*}q)r+p+\overline{q_{p}}}=\overline{((u'_{2}-q)p+2r_{p}^{*}q)r+p+q},\eqno(2.26)$$
and
$$\overline{(u'_{1}p+(r_{p}^{*}-1)q)r+p+\overline{q_{p}}}=\overline{(u_{8}p+(r_{p}^{*}-1)q)r+p+q}.\eqno(2.27)$$
Combining (2.25), (2.26) and (2.27) yields
$$\overline{(u_{5}p+2r_{p}^{*}q)r+p+3\overline{q_{p}}}=\overline{(u_{8}p+(r_{p}^{*}-1)q)r+p+q}.\eqno(2.28)$$
We also have $\chi_{(u_{8}p+(2r_{p}^{*}-1)q)r}(i)=-1$. It follows
$$\overline{(u_{8}p+(r_{p}^{*}-1)q)r+p+q}=\overline{(u_{8}p+(2r_{p}^{*}-1)q)r+p},$$ hence we
have
$$\overline{(u_{5}p+2r_{p}^{*}q)r+p+3\overline{q_{p}}}=\overline{(u_{8}p+(2r_{p}^{*}-1)q)r+p}.\eqno(2.29)$$
Since $v_{0}=r_{p}^{*}\in S$, there exists $u_{9}\in[0,
p_{q}^{*}-1]$ such that $\chi_{(u_{9}p+r_{p}^{*}q)r}(i)=-1$. Thus by
(2.29), we have
$$\overline{(u_{9}p+r_{p}^{*}q)r+p}=\overline{(u_{5}p+2r_{p}^{*}q)r+p+3r_{p}^{*}\overline{q_{p}}}.\eqno(2.30)$$
On the other hand, $\chi_{(u_{5}p+r_{p}^{*}q)r}(i)=1$. By (2.10), we
have
$$\overline{(u_{9}p+r_{p}^{*}q)r+p-(p-\overline{q_{p}})}=\overline{(u_{5}p+r_{p}^{*}q)r+p+q}.\eqno(2.31)$$
(2.30) and (2.31) yields
$$\overline{(u_{9}p+r_{p}^{*}q)r+p}=\overline{(u_{9}p+r_{p}^{*}q)r+p-(p-\overline{q_{p}})+3r_{p}^{*}\overline{q_{p}}}\eqno(2.32)$$
Note that $p=3r_{p}^{*}+1$. Hence (2.32) means that
$\overline{q_{p}}=1$, but it is impossible. Therefore we get a
contradiction and complete the proof of the present case.

\bigskip
\textbf{Case 3.} $0\leqslant v_{0}\leqslant r_{p}^{*}-1$

Suppose $v\in[0, v_{0}]$ and $v\in S$, then we have
$v-r_{p}^{*}+p\leqslant p-1$. If $v-r_{p}^{*}+p\geqslant q_{p}^{*}$,
then $v-r_{p}^{*}+p\notin S$. Therefore we have
$$|S|\leqslant r_{p}^{*}<\frac{1}{3}p.$$ This contradicts
(2.8) and completes the proof of the theorem.
\end{proof}

\bigskip
Department of Mathematical Sciences, Tsinghua University,

Beijing 100084, China

E-mail: zhaojia@mails.tsinghua.edu.cn; xzhang@math.tsinghua.edu.cn
\end{document}